\documentclass[11pt]{article}

\usepackage[centertags]{amsmath}
\usepackage{amsfonts,color}
\usepackage{amssymb,multirow}
\usepackage{comment}
\usepackage{amsthm,authblk}
\usepackage{graphicx,placeins,subcaption}
\usepackage{multirow,hyperref}

\newtheorem{theorem}{Theorem}[section]

\newtheorem{lemma}{Lemma}[section]

\numberwithin{equation}{section}

\begin{document}
\thispagestyle{empty}

\title{Toward Realistic Energy Forecasting: A Delay-Enhanced Fractional-Order Supply-Demand Model}
\author[1]{S. Naveen}
\author[2,3,*]{S. Noeiaghdam}
\affil[1]{Department of Mathematics, School of Arts, Sciences, Humanities \& Education, SASTRA Deemed University, Thanjavur-613401, Tamil Nadu, India. \url{snaveen9790@gmail.com}}
\affil[2]{Institute of Mathematics, Henan Academy of Sciences, Zhengzhou, 450046, China. \url{snoei@hnas.ac.cn}}
\affil[3]{Department of Mathematical Sciences, Saveetha School of Engineering, SIMATS, Chennai, 602105, India.}
\date{}
\maketitle

\begin{abstract}
In order to represent the complex dynamics of contemporary energy systems, a novel fractional-order energy supply-demand model with time delay is presented in this investigation. The fractional model, compared to traditional integer-order models, naturally accommodates for memory and genetic effects, and the incorporation of delay component takes into account unavoidable lags in energy production, transmission, and consumption. To ensure the mathematical rigor of the proposed model, we prove the existence and uniqueness of solutions. We additionally examine into the model's stability within the Ulam-Hyers concept and demonstrate that it is resilient to minor uncertainties and perturbations. To handle fractional derivatives with delay systems, we utilize the Grnwald-Letnikov (GL) discretization scheme, which offers a straightforward and effective method for approximating the solutions. The impact of delay parameters and fractional orders on system behavior is investigated numerically, demonstrating how they shape oscillations, convergence rates, and equilibrium states. According to the results, fractional-order modeling with delay, reinforced by the GL discretization scheme, provides a flexible and realistic framework for examining energy dynamics. This framework gives important insights for long-term policy planning, supply management, and demand forecasting. Additionally, the framework lays the groundwork for upcoming additions that incorporate optimization techniques, stochastic effects, and the integration of renewable energy sources, all of which will further the development of effective and sustainable energy systems. We also discuss three sensitivity analysis scenarios including high demand combined with low supply, high renewable and reduced and imports low demand with high imports which the results show stable and efficient results.

\textbf{\textit{keywords:}} Energy supply-demand dynamics, Fractional-order systems, Time-delay modeling, Ulam-Hyers stability, Grnwald-Letnikov.
\end{abstract}
	\section{Introduction}
	
	Mathematical models in the form of dynamical systems serve as a fundamental tool for describing and predicting the evolution of complex phenomena across time, with applications spanning engineering, bio-mathematics, and beyond. In engineering, dynamical systems are essential for modeling control processes, signal processing, and energy systems, where they facilitate optimization and stability analysis. In bio-mathematics, such models have been extensively used to study population dynamics, epidemiological patterns, and physiological processes, providing insights into nonlinear behaviors, oscillations, and bifurcations that govern biological systems. Moreover, their versatility extends to economics, physics, and environmental sciences, where they enable simulation, forecasting, and decision-making under uncertainty, underscoring their interdisciplinary significance \cite{1,2}. 

	Mathematical models of energy supply and demand systems, often formulated as nonlinear dynamical systems, play a critical role in analyzing the stability, control, and forecasting of resource utilization in complex environments. Such models capture the dynamic interactions between energy supply and demand, often exhibiting chaotic or oscillatory behaviors that require advanced control strategies. For instance, time-delayed feedback and adaptive control methods have been proposed to stabilize chaotic energy systems and achieve synchronization under uncertain parameters \cite{3,5,7}. 
Further studies have extended these models to higher-dimensional systems, such as four-dimensional energy supply-demand frameworks, where linear feedback and model reference control approaches have been developed to enhance stability and performance \cite{6,7}. Applications of these models continue to evolve with modern computational techniques, including physics-informed neural networks and fractional calculus, which provide more accurate simulations and predictions of nonlinear energy dynamics \cite{8,9}. In \cite{23} the authors are presented the variable order RLC circuit system for Mittag-Leffler kernel with integral boundary conditions. In \cite{24} the authors are discussed the Caputo fractional variable order derivative for RC and LC circuit system. The integration of such mathematical approaches contributes to improving the reliability, efficiency, and sustainability of energy resource management in increasingly complex systems \cite{4}. In \cite{21}, the authors are discussed a study on controllability of fractional dynamical systems with distributed delays modeled by $\Omega$-Hilfer fractional derivatives.

	Fractional-order models, grounded in fractional calculus, extend classical integer order differential and integral operators to non-integer (fractional) orders, enabling the modeling of systems with memory, hereditary properties, and anomalous dynamics. These models are especially powerful because the fractional derivative inherently incorporates nonlocality, reflecting system states over their entire history rather than solely at the present moment \cite{10}. The rule of fractional order refers to this generalization of differentiation and integration to arbitrary (real or complex) orders, often formalized through operators such as Riemann-Liouville and Caputo derivatives, each with unique mathematical and physical interpretations \cite{11}. In \cite{22,25} the authors are investigated the time delay for variable order Lorenz system and Chen system. The author discussed the variable order fractional enzyme kinetics model with time delay in \cite{26}. 
	
	Applications of fractional-order modeling span a wide range of scientific and engineering fields, offering advantages over classical integer-order approaches by incorporating memory and hereditary effects into system dynamics. In epidemiology, fractional-order models have been effectively applied to study the spread and control of infectious diseases such as COVID-19, where they capture more realistic dynamics of disease transmission and provide robust numerical solutions. For example, Noeiaghdam et al. developed a nonlinear fractional COVID-19 model with accuracy controlled by the CESTAC method and CADNA library \cite{13}, while Hedayati et al. proposed wavelet-based procedures for solving fractional epidemic models, enhancing computational efficiency \cite{15}. Similarly, Sivashankar et al. analyzed the stability of COVID-19 outbreaks using the Caputo-Fabrizio fractional derivative, demonstrating the capability of fractional calculus to describe epidemic patterns more accurately than classical models \cite{18}. 
Beyond epidemiology, fractional-order methods have also been employed in fuzzy modeling: Allahviranloo et al. introduced a fuzzy fractional approach based on generalized Taylor expansions to address fuzzy fractional differential equations \cite{14}. In applied mathematics and control theory, Ghomanjani and collaborators developed novel numerical techniques, such as Said Ball curves for solving fractional differential-algebraic equations \cite{16} and transcendental Bernstein polynomials for two-dimensional fractional optimal control problems \cite{17}, broadening the applicability of fractional calculus in optimization and control. Collectively, these works highlight the growing importance of fractional-order models not only in biological and epidemiological systems but also in numerical analysis and engineering applications, reinforcing their significance in advancing mathematical modeling \cite{12,18}. 

	In this paper, we discuss the energy supply and demand model
	\begin{equation}\label{1}
		\left\{
		\begin{array}{l}
			\displaystyle  X'_1=a_1X_1(1-X_1/\mathcal{W})-a_2X_2(X_2+X_3)-d_3X_4, \\
			\\
			\displaystyle  X'_2=-z_1X_2-z_2X_3+z_3X_1[N-(X_1-X_3)], \\
			\\
			\displaystyle  X'_3=s_1X_3(s_2X_1-s_3), \\
			\\
			\displaystyle  X'_4=d_1X_1-d_2X_4,
		\end{array}
		\right.
	\end{equation}
	where parameters $a_i, z_i, s_i, d_i, \mathcal{W}, N > 0$, are positive constants, and
	$N < \mathcal{W}$.  The energy resource demand of region $A$ is denoted by $X_1$, while the energy resource supply provided by region $B$ to $A$ is expressed as $X_2$. The amount of energy imported by region $A$ is represented by $X_3$, and the renewable energy resources available in region $A$ are denoted by $X_4$. The elasticity factor of region $A$'s energy demand is given by $a_1$. The supply factor of region $B$, which influences the energy demand of region $A$, is represented by $a_2$. The energy import coefficient of region $A$, which also affects its energy demand, is likewise denoted by $a_2$. The maximum possible energy demand in region $A$ is expressed by $\mathcal{W}$, while $N$ stands for the valve value. The coefficients describing the effects of region $B$'s supply to $A$, the energy imports of $A$, and the impact of region $A$'s demand on the supply rate of region $B$ are given by $z_1$, $z_2$, and $z_3$, respectively. The velocity factor of energy imports in region $A$ is denoted by $s_1$, the per-unit benefit of imported energy is expressed as $s_2$, and the cost of imported energy is indicated by $s_3$. The factor $d_1$ represents the influence of region $A$'s energy demand on the rate of renewable energy adoption, while $d_2$ measures the effect of renewable energy resources themselves on this adoption rate. Finally, $d_3$ reflects the influence of renewable energy resources on the energy demand of region $A$. When these parameters are assigned specific values, the system evolves into a chaotic state \cite{2,3}.
	$$
	a_1= 1;~ a_2= 0.6;~ z_1= 1;~ z_2 = 1.2;~ z_3 = 1.5;~ s_1 =
	11;~ s_2 = 0.7;~ s_3 = 0.6;$$
	$$ d_1= 0.1;~ d_2 = 0.7;~ d_3 = 0.8;~ \mathcal{W} = 3.8;~ N= 1.2.
	$$
	The fractional order form of the problem (\ref{1}) can be presented as
	\begin{equation}\label{2}
		\left\{
		\begin{array}{l}
			\displaystyle {}^CD^\alpha x_1(t)=a_1x_1(t)(1-x_1(t)/\mathcal{W})-a_2x_1(t)(x_2(t-\tau)+x_3(t))-d_3x_4(t-\tau), \\
			\\
			\displaystyle  {}^CD^\alpha x_2(t)=-z_1x_2(t)-z_2x_3(t)+z_3x_1(t)[N-(x_1(t)-x_3(t))], \\
			\\
			\displaystyle  {}^CD^\alpha x_3(t)=s_1x_3(t)(s_2x_1(t)-s_3), \\
			\\
			\displaystyle  {}^CD^\alpha x_4(t)=d_1x_1(t)-d_2x_4(t),
		\end{array}
		\right.
	\end{equation}
	where  ${}^CD^\alpha$  shows the of $\alpha$ order Caputo derivative \cite{19,20}. {Fig. \ref{f1} is the model graph to show the logic relations between functions and parameters. }

	\begin{figure}
		\centering
		\includegraphics[width=0.9\linewidth]{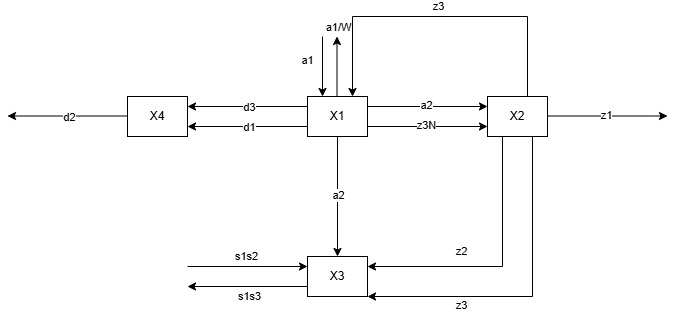}
		\caption{The graphical form of the model. }
		\label{f1}
	\end{figure}

	In this paper, we will focus on the fractional order model of the energy supply and demand with time delay. Fractional-order models of energy supply and demand with time delay provide a more realistic representation of energy systems by incorporating both memory effects (through fractional derivatives) and response lags (through time delay). The fractional order captures long-term dependencies in consumption and generation, while the time delay reflects inevitable practical factors such as transmission lags, decision-making delays, and storage-response times. Including time delay is crucial, as ignoring it can lead to inaccurate predictions, instability, or poor control performance, whereas accounting for it enables more reliable forecasting, stability analysis, and the design of effective control strategies for complex energy systems. Section 2, is to discuss the existence and uniqueness theories of the problem.   In Section 3, we prove the main theorem to show the stability and we can see that we have Ulam-Hyers stable. We have presented the numerical results in Section 4. We have various discussions for different values of the fractional order as well as with time delay.

	
	\section{Existence and Uniqueness Theory}
	
	In this section, we develop the existence and uniqueness theory for the proposed fractional-order energy supply-demand model with time delay. Establishing these results is essential, as they ensure that the model is mathematically consistent and produces well-defined solutions under the given assumptions. The existence of a solution guarantees that the system can realistically describe the underlying energy dynamics, while uniqueness rules out ambiguity, confirming that the model's behavior is determined solely by the initial conditions and parameters. These properties provide a rigorous theoretical foundation for the subsequent numerical analysis and practical applications of the model in energy systems. Moreover, we proved the existence and uniqueness theorem by using the Banach contraction fixed point theorem and Schauder fixed point theorem.\\
	Applying the Caputo derivative operator \cite{19,20} for the system (\ref{2}) we get
	\begin{align}\label{3.1}
		x_1(t)=&x_1(0)+\frac{1}{\Gamma(\alpha)}\int\limits_0^t(t-s)^{\alpha-1}{X}_1(s,x_1(s),x_2(s),x_3(s),x_4(s))ds,\nonumber\\
		x_2(t)=&x_2(0)+\frac{1}{\Gamma(\alpha)}\int\limits_0^t(t-s)^{\alpha-1}{X}_2(s,x_1(s),x_2(s),x_3(s),x_4(s))ds,\nonumber\\
		x_3(t)=&x_3(0)+\frac{1}{\Gamma(\alpha)}\int\limits_0^t(t-s)^{\alpha-1}{X}_3(s,x_1(s),x_2(s),x_3(s),x_4(s))ds,\\
		x_4(t)=&x_4(0)+\frac{1}{\Gamma(\alpha)}\int\limits_0^t(t-s)^{\alpha-1}{X}_4(s,x_1(s),x_2(s),x_3(s),x_4(s))ds.\nonumber
	\end{align}
	Where
	\begin{align}\label{3.2}
		X_1(t,x_1,x_2,x_3,x_4)=&a_1x_1(t)(1-x_1(t)/\mathcal{W})-a_2x_2(t)(x_2(t-\tau)+x_3(t))-d_3x_4(t-\tau),\nonumber\\
		X_2(t,x_1,x_2,x_3,x_4)=&-z_1x_2(t)-z_2x_3(t)+z_3x_1(t)[N-(x_1(t)-x_3(t))], \nonumber\\
		X_3(t,x_1,x_2,x_3,x_4)=&s_1x_3(t)(s_2x_1(t)-s_3), \\
		X_4(t,x_1,x_2,x_3,x_4)=&d_1x_1(t)-d_2x_4(t).\nonumber
	\end{align}
	
	Moreover, let us consider the closed and bounded interval $[0,T]$. We define the norm as follows:
	\begin{equation*}
		\lVert x_1,x_2,x_3,x_4 \rVert=\sup\limits_{t\in[0,T]}\lvert x_1(t)\rvert + \sup\limits_{t\in[0,T]}\lvert x_2(t)\rvert + \sup\limits_{t\in[0,T]}\lvert x_3(t)\rvert + \sup\limits_{t\in[0,T]}\lvert x_4(t)\rvert.
	\end{equation*}
	The system \eqref{3.1} can now be reformulated in the following integral form:
	\begin{equation}\label{3.3}
		X(t)=X(0)+\frac{1}{\Gamma(\alpha)}\int\limits_0^t(t-s)^{\alpha-1}G(s,X(s))\,ds,
	\end{equation}
	where
	\begin{align*}
		X(t)=(x_1,x_2,x_3,x_4)^\mathbb{T}, \qquad
		X(0)=(x_1(0),x_2(0),x_3(0),x_4(0))^\mathbb{T},
	\end{align*}
	and
	\begin{align*}
		G(s,X(s))=\begin{cases}
			X_1(t,x_1(t),x_2(t),x_3(t),x_4(t))\\
			X_2(t,x_1(t),x_2(t),x_3(t),x_4(t))\\
			X_3(t,x_1(t),x_2(t),x_3(t),x_4(t))\\
			X_4(t,x_1(t),x_2(t),x_3(t),x_4(t))
		\end{cases}.
	\end{align*}
	
	To establish the existence and uniqueness of solutions, we impose the following growth conditions on the function $G:[0,T]\times\mathbb{R}\to\mathbb{R}$:
	
	$(\mathbb{H}1)$ There exists a constant $\beta_1>0$ such that for any $X(t),\bar{X}(t)\in\mathbb{R}$,
	\begin{equation*}
		\lvert G(s,X(s))-G(s,\bar{X}(s)) \rvert \le \beta_1\lvert X(s)-\bar{X}(s)\rvert.
	\end{equation*}
	
	$(\mathbb{H}2)$ There exist constants $\beta_2,\beta_3>0$ such that
	\begin{equation*}
		\lvert G(s,X(s))\rvert \le \beta_2\lvert X(s)\rvert+\beta_3.
	\end{equation*}
	
	Based on these assumptions and using the Schauder fixed point theorem, we establish the following result.
	
	\begin{theorem}
		If $G$ is continuous and satisfies $(\mathbb{H}2)$, then the fractional model \eqref{2} admits at least one solution.
	\end{theorem}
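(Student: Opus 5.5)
We plan to recast \eqref{3.3} as a fixed point problem on the Banach space $E=C([0,T];\mathbb{R}^4)$ with the norm $\lVert\cdot\rVert$ defined above. We define the operator $\mathcal{P}:E\to E$ by
\begin{equation*}
(\mathcal{P}X)(t)=X(0)+\frac{1}{\Gamma(\alpha)}\int_0^t(t-s)^{\alpha-1}G(s,X(s))\,ds, \qquad t\in[0,T].
\end{equation*}
On $[-\tau,0]$ the delayed arguments $x_2(t-\tau)$, $x_4(t-\tau)$ are replaced by a prescribed continuous history $\phi$, which we regard as part of the data inside $G$. Fixed points of $\mathcal{P}$ are then exactly the solutions of \eqref{3.1}, and hence of \eqref{2}. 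The aim is to apply the Schauder fixed point theorem on a closed ball $B_r=\{X\in E:\lVert X\rVert\le r\}$, which is closed, bounded and convex.

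\textbf{Step 1 (invariance of the ball).} For $X\in B_r$, hypothesis $(\mathbb{H}2)$ gives
\begin{equation*}
\lvert(\mathcal{P}X)(t)\rvert\le\lvert X(0)\rvert+\frac{T^\alpha}{\Gamma(\alpha+1)}(\beta_2 r+\beta_3).
\end{equation*}
So $\mathcal{P}(B_r)\subset B_r$ provided
\begin{equation*}
r\ge\frac{\lvert X(0)\rvert+\beta_3T^\alpha/\Gamma(\alpha+1)}{1-\beta_2T^\alpha/\Gamma(\alpha+1)}.
\end{equation*}
This choice of $r$ requires $\beta_2T^\alpha<\Gamma(\alpha+1)$. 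This smallness condition is the main obstacle, since $(\mathbb{H}2)$ alone does not give it for arbitrary $T$.

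Our first remedy would be to work with the weighted (Bielecki-type) norm $\sup_t e^{-\lambda t}\lvert X(t)\rvert$ with $\lambda$ large. Under this norm the factor $\beta_2 T^\alpha/\Gamma(\alpha+1)$ is replaced by $\beta_2\lambda^{-\alpha}$, which can be made less than $1$. Failing that, we would accept the smallness condition as an implicit hypothesis of the theorem, or obtain a local solution on a short interval and continue it step by step using the linear growth bound.

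\textbf{Step 2 (continuity of $\mathcal{P}$).} Let $X_n\to X$ in $B_r$. Since $G$ is continuous, it is uniformly continuous on the compact set $[0,T]\times\{\lvert X\rvert\le r\}$. Hence $\sup_s\lvert G(s,X_n(s))-G(s,X(s))\rvert\to0$, and therefore
\begin{equation*}
\lVert\mathcal{P}X_n-\mathcal{P}X\rVert\le\frac{T^\alpha}{\Gamma(\alpha+1)}\sup_s\lvert G(s,X_n(s))-G(s,X(s))\rvert\to0.
\end{equation*}

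\textbf{Step 3 (equicontinuity).} Set $M=\beta_2r+\beta_3$, which bounds $\lvert G\rvert$ on $B_r$. For $0\le t_1<t_2\le T$, splitting the integral at $t_1$ gives
\begin{equation*}
\lvert(\mathcal{P}X)(t_2)-(\mathcal{P}X)(t_1)\rvert\le\frac{M}{\Gamma(\alpha+1)}\bigl(2(t_2-t_1)^\alpha+t_2^\alpha-t_1^\alpha\bigr).
\end{equation*}
The right-hand side tends to $0$ as $t_2\to t_1$, uniformly in $X\in B_r$.

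\textbf{Conclusion.} Steps 1 and 3 show that $\mathcal{P}(B_r)$ is uniformly bounded and equicontinuous, so it is relatively compact by the Arzel\`a--Ascoli theorem. Together with Step 2, $\mathcal{P}$ is completely continuous on $B_r$. The Schauder fixed point theorem then yields a fixed point, which is a solution of \eqref{2}.
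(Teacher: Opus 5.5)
Your proposal is correct and follows the same route as the paper: Schauder's theorem on a ball, with compactness from the uniform bound plus the equicontinuity estimate via Arzel\`a--Ascoli. It is more careful than the paper in two places: the paper's unspecified bound $\lVert\mathbb{A}(X)\rVert\le\Re$ silently requires $\beta_2T^\alpha<\Gamma(\alpha+1)$, which your Bielecki norm removes (the weighted ball is still sup-norm bounded by $re^{\lambda T}$, so Steps 2--3 go through with $M=\beta_2re^{\lambda T}+\beta_3$), and your Step 2 supplies the continuity of the operator that the paper never checks. One small correction: the delayed terms are genuine data only for $t\le\tau$; for $t>\tau$ they involve the unknown, so $G$ should be read as a function of $(s,X(s),X(s-\tau))$ with $X$ extended by $\phi$, which leaves your estimates unchanged.
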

	
	\begin{proof}
		Define the operator $\mathbb{A}:\mathbb{X}\to\mathbb{X}$ as
		\begin{equation}\label{6}
			\mathbb{A}(X)(t)=X(0)+\frac{1}{\Gamma(\alpha)}\int\limits_0^t(t-s)^{\alpha-1}G(s,X(s))\,ds.
		\end{equation}
		For each $X\in\mathbb{X}$, we estimate
		\begin{align*}
			\lvert \mathbb{A}(X)(t)\rvert &\le \lvert X_0\rvert+\int\limits_0^t (t-s)^{\alpha-1}\lvert G(s,X(s))\rvert \,ds\\
			&\le \lvert X_0\rvert+\int\limits_0^t (t-s)^{\alpha-1}\left[\beta_2\lvert X\rvert+\beta_3\right]ds\\
			&\le \lvert X_0\rvert+\frac{T^\alpha}{\Gamma(\alpha+1)}\left[\beta_2\lVert X\rVert+\beta_3\right].
		\end{align*}
		This further implies
		\begin{equation}\label{7}
			\lVert \mathbb{A}(X)\rVert\le\lvert X_0\rvert+\frac{T^\alpha}{\Gamma(\alpha+1)}\left[\beta_2\lVert X\rVert+\beta_3\right]\le\Re.
		\end{equation}
		From \eqref{7}, it follows that $X\in\mathbb{A}$, hence $X(\mathbb{A})\subset\mathbb{A}$. Therefore, $\mathbb{A}$ is a bounded operator.
		
		To prove complete continuity, consider $t_1<t_2\in[0,T]$. Then
		\begin{align}\label{8}
			\lvert \mathbb{A}(X)(t_2)- \mathbb{A}(X)(t_1)\rvert &= \Bigg\lvert\frac{1}{\Gamma(\alpha)}\int\limits_0^{t_2}(t_2-s)^{\alpha-1} G(s,X(s))\,ds
			-\frac{1}{\Gamma(\alpha)}\int\limits_0^{t_1}(t_1-s)^{\alpha-1} G(s,X(s))\,ds\Bigg\rvert\nonumber\\
			&\le \frac{1}{\Gamma(\alpha)}\Bigg[\int\limits_0^{t_1}(t_1-s)^{\alpha-1}(t_2-s)^{\alpha-1} \lvert G(s,X(s))\rvert ds
			+\int\limits_{t_1}^{t_2}(t_2-s)^{\alpha-1} \lvert G(s,X(s))\rvert ds \Bigg]\nonumber\\
			&\le \frac{(\beta_2+\Re+\beta_3)}{\Gamma(\alpha+1)}\left(t_2^\alpha-t_1^\alpha+2(t_2-t_1)\right).
		\end{align}
		From \eqref{8}, we observe that as $t_2\to t_1$, the right-hand side tends to zero. Hence
		\begin{equation*}
			\lVert \mathbb{A}(X)(t_2)- \mathbb{A}(X)(t_1)\rVert\to 0, \quad \text{as } t_2\to t_1.
		\end{equation*}
		Thus, $\mathbb{A}$ is equicontinuous. By the ArzelÃ -Ascoli theorem, $\mathbb{A}$ is completely continuous. Therefore, by Schauder's fixed point theorem, system \eqref{2} possesses at least one solution.
	\end{proof}
	
	We now prove the uniqueness of the solution for system \eqref{2}.
	
	\begin{theorem}
		The system \eqref{2} admits a unique solution if the following condition holds:
		\begin{equation*}
			\frac{T^\alpha}{\Gamma(\alpha+1)}\beta_4<1.
		\end{equation*}
	\end{theorem}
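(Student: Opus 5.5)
The plan is to apply the Banach contraction principle to the operator $\mathbb{A}$ defined in \eqref{6}. I will work on the Banach space $\mathbb{X}=C([0,T];\mathbb{R}^4)$ with the sum-of-sup norm introduced above; any needed history segment on $[-\tau,0]$ is taken as prescribed initial data. I read $\beta_4$ as the Lipschitz constant of $G$ from $(\mathbb{H}1)$, so $\beta_4=\beta_1$, measured with respect to this norm.

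\textbf{Step 1: a Lipschitz bound for $G$.} I will first justify a Lipschitz bound of the form
\begin{equation*}
\lvert G(s,X(s))-G(s,\bar X(s))\rvert\le \beta_4\lVert X-\bar X\rVert .
\end{equation*}
Each component $X_i$ in \eqref{3.2} is polynomial, at most quadratic, in the current and delayed states. On a ball $\lVert X\rVert\le \Re$ (the invariant ball from the existence proof) the bilinear terms can be handled by writing, for example,
\begin{equation*}
x_1x_2^\tau-\bar x_1\bar x_2^\tau=(x_1-\bar x_1)x_2^\tau+\bar x_1(x_2^\tau-\bar x_2^\tau).
\end{equation*}
The delayed terms are controlled by the sup norm, since $\sup_{t}\lvert x(t-\tau)-\bar x(t-\tau)\rvert\le\lVert x-\bar x\rVert$ when the two functions share the same history. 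This gives an explicit $\beta_4$ built from $a_i,z_i,s_i,d_i,\mathcal{W},N$ and $\Re$, for instance $\beta_4=\max\{a_1+2a_1\Re/\mathcal{W}+2a_2\Re+z_3(N+3\Re)+s_1(s_2\Re+s_3)+d_1,\dots\}$. This is the main obstacle: the nonlinearity is only locally Lipschitz, so the contraction has to be carried out on the closed ball $B_\Re$. For that I need $\mathbb{A}(B_\Re)\subset B_\Re$, which is exactly \eqref{7} with $\Re$ chosen so that $\lvert X_0\rvert+\frac{T^\alpha}{\Gamma(\alpha+1)}(\beta_2\Re+\beta_3)\le\Re$. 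Alternatively, one can simply invoke $(\mathbb{H}1)$ as a standing hypothesis, as the paper does.

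\textbf{Step 2: the contraction estimate.} For $X,\bar X\in B_\Re$ and $t\in[0,T]$,
\begin{align*}
\lvert \mathbb{A}(X)(t)-\mathbb{A}(\bar X)(t)\rvert&\le\frac{1}{\Gamma(\alpha)}\int_0^t(t-s)^{\alpha-1}\lvert G(s,X(s))-G(s,\bar X(s))\rvert\,ds\\
&\le \frac{\beta_4}{\Gamma(\alpha)}\lVert X-\bar X\rVert\int_0^t(t-s)^{\alpha-1}ds\le\frac{T^\alpha\beta_4}{\Gamma(\alpha+1)}\lVert X-\bar X\rVert .
\end{align*}
Taking the supremum over $t$ and summing over the components, with the constants absorbed into $\beta_4$, gives
\begin{equation*}
\lVert\mathbb{A}X-\mathbb{A}\bar X\rVert\le\frac{T^\alpha\beta_4}{\Gamma(\alpha+1)}\lVert X-\bar X\rVert .
\end{equation*}

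\textbf{Step 3: conclusion.} Under the hypothesis $\frac{T^\alpha}{\Gamma(\alpha+1)}\beta_4<1$, $\mathbb{A}$ is a contraction on the complete metric space $B_\Re$. By the Banach fixed point theorem it has exactly one fixed point there, and this fixed point is the unique solution of the integral equation \eqref{3.3}, hence of system \eqref{2}.

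To upgrade this to uniqueness among all continuous solutions, I will use the fact that any solution is bounded on the compact interval $[0,T]$. Enlarging $\Re$ if necessary, so long as the contraction condition persists with the corresponding $\beta_4$, covers every solution. Otherwise the statement is read with $\beta_4$ taken as a global Lipschitz constant as in $(\mathbb{H}1)$.
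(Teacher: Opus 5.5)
Your proposal is correct and is essentially the paper's proof. With $\beta_4$ read as the Lipschitz constant of $G$ from $(\mathbb{H}1)$, you obtain the same estimate $\lVert\mathbb{A}X-\mathbb{A}\bar X\rVert\le\frac{T^\alpha}{\Gamma(\alpha+1)}\beta_4\lVert X-\bar X\rVert$ and conclude by Banach's fixed point theorem, and your explicit handling of the history on $[-\tau,0]$ adds care the paper lacks. The optional local-ball refinement is not self-contained, however: $\mathbb{A}(B_\Re)\subset B_\Re$ cannot rest on $(\mathbb{H}2)$, which the quadratic terms violate just as they violate a global $(\mathbb{H}1)$; enlarging $\Re$ increases $\beta_4$ and can destroy the contraction; and a Henry--Gronwall argument, not a contraction, is what would give uniqueness among all continuous solutions in that setting. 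So your proof, like the paper's, ultimately rests on the global reading of $(\mathbb{H}1)$.
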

	
	\begin{proof}
		Consider the operator $\mathbb{A}:\mathbb{X}\to\mathbb{X}$ defined in \eqref{6}. For $X,\bar{X}\in\mathbb{X}$, we have
		\begin{align*}
			\lVert \mathbb{A}(X)-\mathbb{A}(\bar{X})\rVert &= \sup\limits_{t\in[0,T]}\Bigg\lvert \frac{1}{\Gamma(\alpha)}\int\limits_0^t(t-s)^{\alpha-1}\Big(G(s,X(s))-G(s,\bar{X}(s))\Big)\,ds\Bigg\rvert\\
			&\le \frac{T^\alpha}{\Gamma(\alpha+1)}\beta_4\lVert X-\bar{X}\rVert.
		\end{align*}
		Thus,
		\begin{equation}\label{9}
			\lVert \mathbb{A}(X)-\mathbb{A}(\bar{X})\rVert\le\frac{T^\alpha}{\Gamma(\alpha+1)}\beta_4\lVert X-\bar{X}\rVert.
		\end{equation}
		From \eqref{9}, it follows that $\mathbb{A}$ is a contraction operator. Hence, by Banach's fixed point theorem, system \eqref{2} has a unique solution.
	\end{proof}

	
	\section{Stability Analysis}
	
	In this section, we focus on the stability analysis of the proposed fractional-order energy supply-demand model with time delay. Stability plays a central role in validating the reliability of a mathematical model, as it ensures that small deviations in initial conditions, external disturbances, or model perturbations do not lead to unrealistic or unbounded behaviors in the system's solutions. By establishing Ulam-Hyers stability, we guarantee that the model is not only mathematically sound but also robust under practical uncertainties, making it suitable for applications in real-world energy systems where fluctuations and external influences are inevitable. This analysis therefore provides a solid foundation for the trustworthiness of the numerical results and further applications of the model.

	Now, in order to study the stability analysis of the model \eqref{2}, we proceed by introducing a small perturbation in $\Phi\in[0,T]$, where $\Phi(0)=0$, which depends solely on the solution $X$ as follows:
	\begin{itemize}
		\item $\lvert \Phi(t)\rvert\le\epsilon$, for any $\epsilon>0$,
		\item ${}^CD^\alpha X(t)=G(t,X(t))+\Phi(t)$.
	\end{itemize}
	
	\begin{lemma}\label{lem1}
		For the perturbed problem
		\begin{equation}
			{}^CD^\alpha X(t)=G(t,X(t))+\Phi(t),
		\end{equation}
		with the initial condition $X(0)=X_0$, the corresponding solution satisfies the following inequality:
		\begin{equation}\label{10}
			\left| X(t)-\left(X_0+\frac{1}{\Gamma(\alpha)}\int\limits_0^t(t-s)^{\alpha-1}G(s,X(s))ds\right)\right|
			\le\frac{T^\alpha}{\Gamma(\alpha+1)}\epsilon=\Pi_{T,\alpha\epsilon}.
		\end{equation}
	\end{lemma}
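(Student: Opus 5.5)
The plan is to convert the perturbed differential problem into an equivalent Volterra integral equation and then isolate the contribution of the perturbation $\Phi$. This is the same route that led from \eqref{2} to \eqref{3.1} and \eqref{3.3}. We apply the Riemann--Liouville fractional integral $I^\alpha$ of order $\alpha$ to both sides of ${}^CD^\alpha X(t)=G(t,X(t))+\Phi(t)$. We then use the standard identity $I^\alpha\,{}^CD^\alpha X(t)=X(t)-X(0)$, valid for $0<\alpha\le 1$ and $X$ absolutely continuous, together with $X(0)=X_0$. This yields
\begin{equation*}
X(t)=X_0+\frac{1}{\Gamma(\alpha)}\int_0^t(t-s)^{\alpha-1}G(s,X(s))\,ds+\frac{1}{\Gamma(\alpha)}\int_0^t(t-s)^{\alpha-1}\Phi(s)\,ds .
\end{equation*}
Here the delayed arguments $x_2(t-\tau)$ and $x_4(t-\tau)$ inside $G$ cause no difficulty. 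They are simply part of the functional $G(s,X(s))$ and are carried along unchanged, with whatever prescribed history on $[-\tau,0]$ is implicit in the model.

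Next we move the first two terms on the right to the left-hand side. The quantity inside the absolute value in \eqref{10} is then exactly $\frac{1}{\Gamma(\alpha)}\int_0^t(t-s)^{\alpha-1}\Phi(s)\,ds$. Taking absolute values (componentwise or in the norm introduced in Section 2) and using the hypothesis $|\Phi(s)|\le\epsilon$ gives
\begin{equation*}
\left|\frac{1}{\Gamma(\alpha)}\int_0^t(t-s)^{\alpha-1}\Phi(s)\,ds\right|\le\frac{\epsilon}{\Gamma(\alpha)}\int_0^t(t-s)^{\alpha-1}ds=\frac{t^\alpha}{\Gamma(\alpha+1)}\epsilon\le\frac{T^\alpha}{\Gamma(\alpha+1)}\epsilon ,
\end{equation*}
where the last step uses $t\in[0,T]$ and the monotonicity of $t\mapsto t^\alpha$ for $\alpha>0$. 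This is the bound $\Pi_{T,\alpha\epsilon}$.

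The computation itself is routine. The only point that needs care is the first step, justifying the integral representation. We need $G(\cdot,X(\cdot))+\Phi$ to be integrable, which holds since $G$ is continuous, $X$ is bounded on $[0,T]$, and $\Phi$ is bounded by $\epsilon$. We also need the identity $I^\alpha\,{}^CD^\alpha X=X-X(0)$, which requires $0<\alpha\le1$. I would state these regularity assumptions explicitly, measurability of $\Phi$ included, rather than prove anything new. The vector case follows by applying the same estimate to each of the four components.
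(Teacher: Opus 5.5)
Your proof is correct and is the argument the paper intends when it omits the proof as following ``in a similar manner as the preceding results''. That argument rewrites the perturbed problem in the integral form \eqref{3.3} with the extra term $\frac{1}{\Gamma(\alpha)}\int_0^t(t-s)^{\alpha-1}\Phi(s)\,ds$, and bounds that term by $\frac{T^\alpha}{\Gamma(\alpha+1)}\epsilon$ with the same kernel estimate used for \eqref{7}; your remarks on the regularity behind $I^\alpha\,{}^CD^\alpha X=X-X(0)$ and on the delayed arguments inside $G$ fill in details the paper leaves unstated.
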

	
	\begin{proof}
		The proof of Lemma \ref{lem1} follows directly in a similar manner as the preceding results, hence it is omitted here for brevity.
	\end{proof}
	
	\begin{theorem}
		Under the assumption $(H1)$ together with condition \eqref{10}, the solution of the integral equation \eqref{3.3} is Ulam-Hyers stable. Accordingly, the numerical solutions of the model \eqref{2} preserve Ulam-Hyers stability provided that the following condition is fulfilled:
		\begin{equation}\label{11}
			\Lambda=\frac{T^\alpha}{\Gamma(\alpha+1)}\beta_4<1.
		\end{equation}
	\end{theorem}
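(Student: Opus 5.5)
Let $X$ be any solution of the perturbed problem of Lemma \ref{lem1}, i.e. ${}^CD^\alpha X(t)=G(t,X(t))+\Phi(t)$ with $|\Phi(t)|\le\epsilon$ and $X(0)=X_0$. Let $\bar X$ be the unique solution of the integral equation \eqref{3.3} with the same initial value $X_0$; it exists by the uniqueness theorem of Section 2, since \eqref{11} is exactly the contraction condition used there. The goal is a bound $\lVert X-\bar X\rVert\le C\epsilon$ with $C$ independent of $\epsilon$, which is the Ulam--Hyers property.

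The key estimate splits the difference by adding and subtracting the Volterra operator applied to $X$. For $t\in[0,T]$,
\begin{align*}
|X(t)-\bar X(t)|
\le{}& \left|X(t)-X_0-\frac{1}{\Gamma(\alpha)}\int_0^t(t-s)^{\alpha-1}G(s,X(s))\,ds\right|\\
&+\frac{1}{\Gamma(\alpha)}\int_0^t(t-s)^{\alpha-1}\left|G(s,X(s))-G(s,\bar X(s))\right|ds .
\end{align*}
The first term is at most $\Pi_{T,\alpha\epsilon}=\frac{T^\alpha}{\Gamma(\alpha+1)}\epsilon$ by inequality \eqref{10}. For the second term, use the Lipschitz hypothesis $(\mathbb{H}1)$, with constant $\beta_4$ as in the uniqueness theorem, and take the supremum over $s$. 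Since $\int_0^t(t-s)^{\alpha-1}ds/\Gamma(\alpha)\le T^\alpha/\Gamma(\alpha+1)$, the second term is at most $\Lambda\lVert X-\bar X\rVert$.

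Taking the supremum over $t$ gives $\lVert X-\bar X\rVert\le\Pi_{T,\alpha\epsilon}+\Lambda\lVert X-\bar X\rVert$. Because $\Lambda<1$, this rearranges to
\[
\lVert X-\bar X\rVert\le\frac{T^\alpha}{\Gamma(\alpha+1)(1-\Lambda)}\,\epsilon .
\]
This is the Ulam--Hyers estimate with constant $C=\frac{T^\alpha}{\Gamma(\alpha+1)(1-\Lambda)}$.

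The main obstacle is justifying $(\mathbb{H}1)$ for the actual right-hand side of \eqref{2}. That right-hand side contains quadratic terms ($x_1^2$, $x_1x_2(t-\tau)$, $x_1x_3$, $x_3x_1$) and delayed arguments, so it is only locally Lipschitz. I would first restrict to a ball $\lVert X\rVert\le\Re$ that is invariant under $\mathbb{A}$, as in the existence proof. On that ball I would compute $\beta_4$ explicitly from the parameters, e.g. terms like $2a_1\Re/\mathcal{W}$, $a_2\Re$, $z_3(N+2\Re)$ and $s_1s_2\Re$.

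For the delay, I would extend $X$ and $\bar X$ by a common history on $[-\tau,0]$. Then $\sup_{s}|x_i(s-\tau)-\bar x_i(s-\tau)|\le\lVert X-\bar X\rVert$, so the delayed terms are absorbed into the same sup-norm constant. The remainder of the argument is the routine Gronwall-free contraction computation above.
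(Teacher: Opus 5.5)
Your argument is correct and follows the route the paper intends: insert $X_0+\frac{1}{\Gamma(\alpha)}\int_0^t(t-s)^{\alpha-1}G(s,X(s))\,ds$, bound one piece by \eqref{10} and the other by the Lipschitz constant $\beta_4$, then absorb using $\Lambda<1$, which gives your explicit constant $\frac{T^\alpha}{\Gamma(\alpha+1)(1-\Lambda)}$. Your split with $G(s,X(s))$ is in fact the correct form of the paper's display \eqref{12}, which as printed has $G(s,\bar X(s))$ in both places, so its second term is zero and its first just repeats the left-hand side; your optional local-Lipschitz refinement is not needed, since the theorem assumes $(\mathbb{H}1)$ globally, but note that a ball invariant under $\mathbb{A}$ does not by itself contain the perturbed solution $X$ (it is not a fixed point of $\mathbb{A}$), so that route would need an extra a priori bound on $X$, e.g.\ for small $\epsilon$.
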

	
	\begin{proof}
		Let $X\in\mathbb{X}$ be any solution, and let $\bar{X}$ denote at most one solution of \eqref{3.3}. Then we have
		\begin{align}\label{12}
			\lvert X(t)-\bar{X}(t)\rvert
			=&\left| X(t)-\left(X_0+\frac{1}{\Gamma(\alpha)}\int\limits_0^t(t-s)^{\alpha-1}G(s,\bar{X}(s))ds\right)\right|\nonumber\\
			\le& \left| X(t)-\left(X_0+\frac{1}{\Gamma(\alpha)}\int\limits_0^t(t-s)^{\alpha-1}G(s,\bar{X}(s))ds\right)\right|\\
			&+\left| \frac{1}{\Gamma(\alpha)}\int\limits_0^t(t-s)^{\alpha-1}G(s,\bar{X}(s))ds-\frac{1}{\Gamma(\alpha)}\int\limits_0^t(t-s)^{\alpha-1}G(s,\bar{X}(s))ds\right|\nonumber.
		\end{align}
		Finally, by applying condition \eqref{11} and taking the sup norm on both sides of \eqref{12}, we establish that the solution of the integral equation \eqref{3.3} is Ulam-Hyers stable. Therefore, it follows that the solution of the original model \eqref{2} is also Ulam-Hyers stable.
	\end{proof}

	\section{Numerical Approximation}
	
	To obtain approximate solutions of the proposed fractional-order supply-demand system with time delay, we employ the Grnwald-Letnikov (GL) discretization scheme. This approach is widely used in fractional calculus due to its simplicity and its ability to approximate fractional derivatives directly from their definition as infinite series. By discretizing the time domain with a uniform step size, the GL method generates a sequence of coefficients that capture the memory effect inherent in fractional-order systems. The method is well suited for delay differential equations since delays can be incorporated by shifting the state variables by the corresponding step indices. In what follows, we present a step-by-step implementation of the Grnwald-Letnikov scheme tailored to our fractional-order delay system, including the initialization, computation of coefficients, assignment of initial conditions, and the main iteration procedure for updating the system states. Fig. \ref{Alg} shows the main algorithm:

	\begin{figure}[h]
		\centering
		\includegraphics[width=0.9\linewidth]{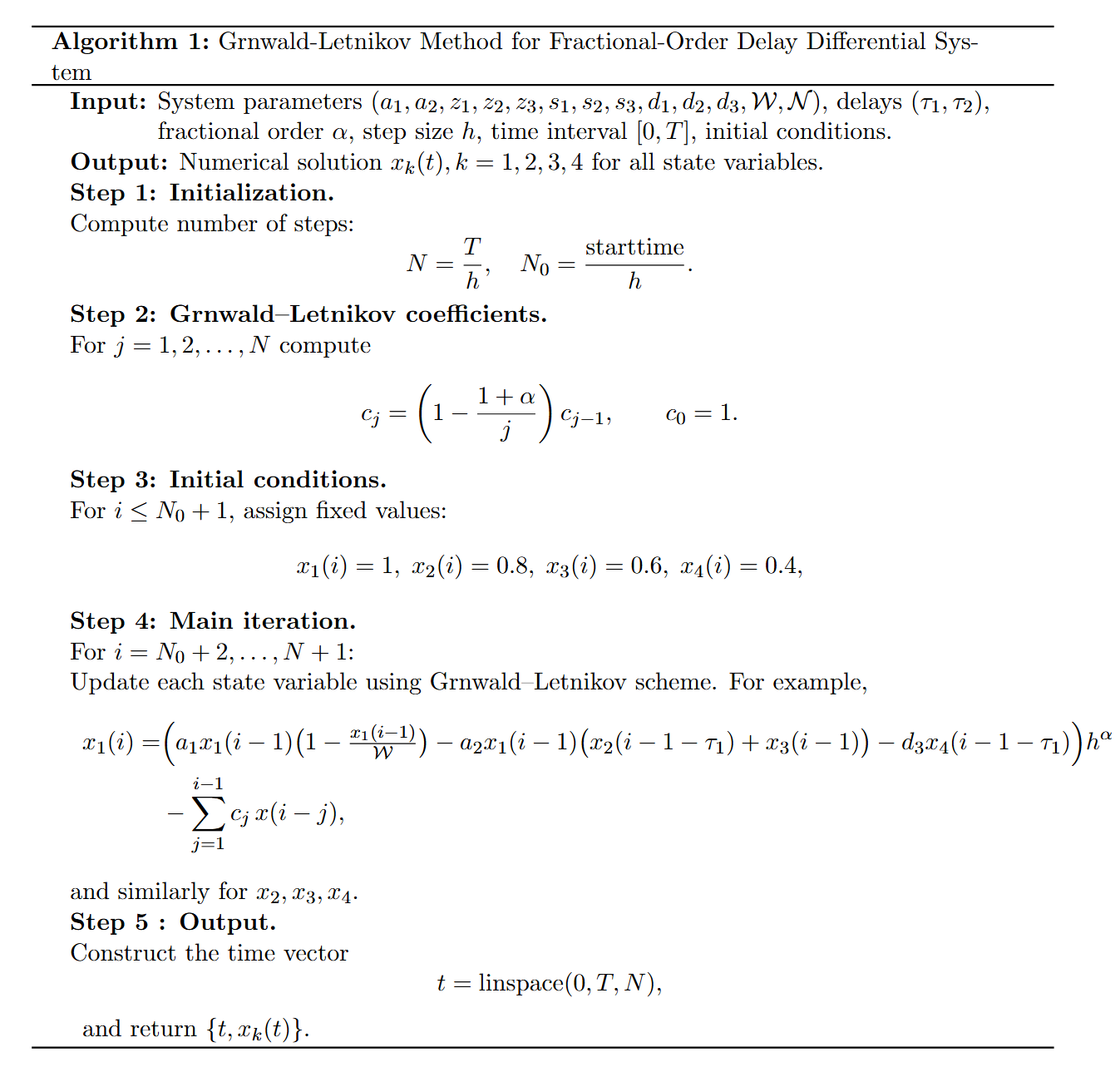}
		\caption{Numerical Approximation Algorithm.  }
		\label{Alg}
	\end{figure}

\FloatBarrier

\section{Numerical Results and Discussion}

In this section, we present the numerical results and discussion of the proposed fractional-order energy supply-demand model with time delay. The main objective is to investigate how variations in the fractional differentiation order and delay parameter influence the system's dynamic behavior, including oscillations, convergence, and long-term stability of the state variables. Such analysis is important because fractional-order models capture memory and hereditary effects, while time delays represent realistic lags in energy production, distribution, and consumption processes. By examining different combinations of fractional orders and delays, we can better understand the interplay between these factors, assess the robustness of the model, and provide insights into how energy systems respond under practical operating conditions.

	\begin{figure}[h]
		\centering
		\includegraphics[width=0.9\linewidth]{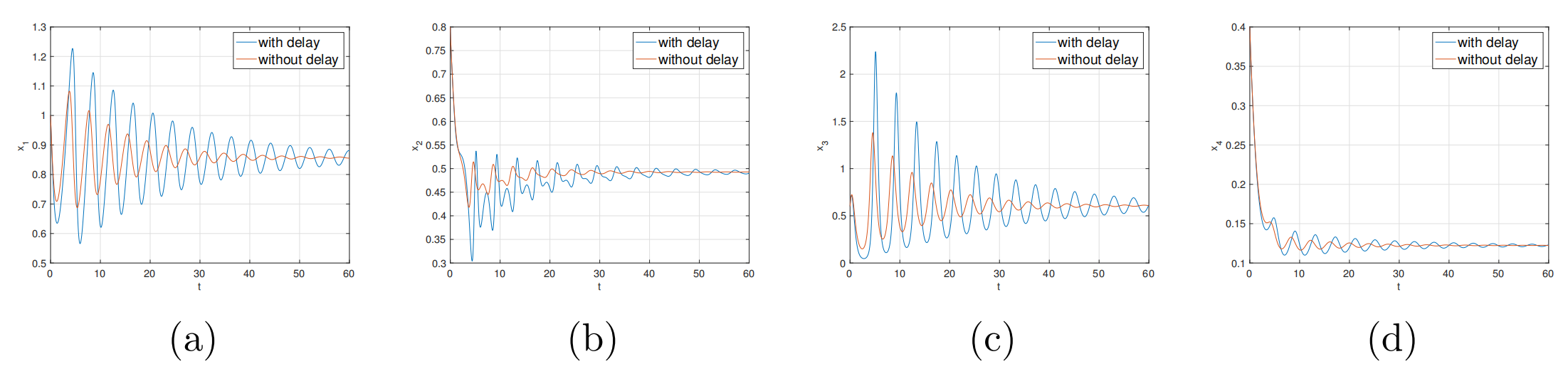}
		\caption{Fractional order proposed model with order 0.98 and delay 0.5 and without delay. }
		\label{fig1}
	\end{figure}

\FloatBarrier
\begin{table}[h]
	\centering
	\caption{Fractional order proposed system with order $0.98.$}
	\label{tab1}
	\begin{tabular}{|c|cccc|cccc|}
		\hline
		\multirow{2}{*}{{Time}} &
		\multicolumn{4}{c|}{{With   Delay}} &
		\multicolumn{4}{c|}{{Without   Delay}} \\ \cline{2-9}
		&
		\multicolumn{1}{c|}{\textbf{$x_1$}} &
		\multicolumn{1}{c|}{\textbf{$x_2$}} &
		\multicolumn{1}{c|}{\textbf{$x_3$}} &
		\textbf{$x_4$} &
		\multicolumn{1}{c|}{\textbf{$x_1$}} &
		\multicolumn{1}{c|}{\textbf{$x_2$}} &
		\multicolumn{1}{c|}{\textbf{$x_3$}} &
		\textbf{$x_4$} \\ \hline
		0 &
		\multicolumn{1}{c|}{1} &
		\multicolumn{1}{c|}{0.8} &
		\multicolumn{1}{c|}{0.6} &
		0.4 &
		\multicolumn{1}{c|}{1} &
		\multicolumn{1}{c|}{0.8} &
		\multicolumn{1}{c|}{0.6} &
		0.4 \\ \hline
		10 &
		\multicolumn{1}{c|}{0.6249} &
		\multicolumn{1}{c|}{0.4342} &
		\multicolumn{1}{c|}{0.7901} &
		0.1238 &
		\multicolumn{1}{c|}{0.7797} &
		\multicolumn{1}{c|}{0.4735} &
		\multicolumn{1}{c|}{0.3848} &
		0.1168 \\ \hline
		20 &
		\multicolumn{1}{c|}{0.9587} &
		\multicolumn{1}{c|}{0.458} &
		\multicolumn{1}{c|}{0.3384} &
		0.1198 &
		\multicolumn{1}{c|}{0.8797} &
		\multicolumn{1}{c|}{0.4955} &
		\multicolumn{1}{c|}{0.7588} &
		0.1255 \\ \hline
		30 &
		\multicolumn{1}{c|}{0.7734} &
		\multicolumn{1}{c|}{0.4945} &
		\multicolumn{1}{c|}{0.7314} &
		0.1229 &
		\multicolumn{1}{c|}{0.8587} &
		\multicolumn{1}{c|}{0.4894} &
		\multicolumn{1}{c|}{0.5433} &
		0.1212 \\ \hline
		40 &
		\multicolumn{1}{c|}{0.9115} &
		\multicolumn{1}{c|}{0.4815} &
		\multicolumn{1}{c|}{0.5384} &
		0.1226 &
		\multicolumn{1}{c|}{0.8519} &
		\multicolumn{1}{c|}{0.494} &
		\multicolumn{1}{c|}{0.6317} &
		0.1227 \\ \hline
		50 &
		\multicolumn{1}{c|}{0.8202} &
		\multicolumn{1}{c|}{0.4938} &
		\multicolumn{1}{c|}{0.6122} &
		0.1217 &
		\multicolumn{1}{c|}{0.8608} &
		\multicolumn{1}{c|}{0.4921} &
		\multicolumn{1}{c|}{0.6006} &
		0.1224 \\ \hline
		60 &
		\multicolumn{1}{c|}{0.8807} &
		\multicolumn{1}{c|}{0.4906} &
		\multicolumn{1}{c|}{0.6162} &
		0.1231 &
		\multicolumn{1}{c|}{0.8554} &
		\multicolumn{1}{c|}{0.4928} &
		\multicolumn{1}{c|}{0.6073} &
		0.1224 \\ \hline
	\end{tabular}
\end{table}\FloatBarrier

Figure \ref{fig1} compares the dynamics of the system with and without a delay of $\tau=0.5$. For all four state variables, the trajectories exhibit oscillatory behavior in the transient stage before stabilizing at steady-state values. The presence of delay increases both the amplitude and duration of oscillations. In particular, the energy demand ($x_1$) and energy supply ($x_2$) converge more slowly in the delayed case. The energy import ($x_3$) shows sharp peaks during the initial period, while renewable energy ($x_4$) rapidly settles to a small constant level. These results suggest that the introduction of a delay temporarily destabilizes the energy system but does not prevent its long-term stability. In Table \ref{tab1} represented the non-integer order proposed model with order $0.98$ for with and without delay.\\

Figure \ref{fig2} shows the system response with a smaller delay of $\tau=0.1$. Compared with Figure \ref{fig1}, the oscillations in all state variables are significantly reduced, and convergence to equilibrium is faster. Both energy demand ($x_1$) and supply ($x_2$) oscillate moderately before stabilizing, while the energy import ($x_3$) shows a smoother decay after an initial overshoot. The renewable energy component ($x_4$) again stabilizes quickly at a low constant value. This indicates that a small delay only induces mild disturbances, and the overall system retains better synchronization between demand, supply, and imports. In Table \ref{tab2} represented the non-integer order proposed model with order $0.99$ for with and without delay.
\begin{table}[h]
	\centering
	\caption{Fractional order proposed system with order $0.99$}
	\label{tab2}
	\begin{tabular}{|c|cccc|cccc|}
		\hline
		\multirow{2}{*}{{Time}} &
		\multicolumn{4}{c|}{{With   Delay}} &
		\multicolumn{4}{c|}{{Without   Delay}} \\ \cline{2-9}
		&
		\multicolumn{1}{c|}{\textbf{$x_1$}} &
		\multicolumn{1}{c|}{\textbf{$x_2$}} &
		\multicolumn{1}{c|}{\textbf{$x_3$}} &
		\textbf{$x_4$} &
		\multicolumn{1}{c|}{\textbf{$x_1$}} &
		\multicolumn{1}{c|}{\textbf{$x_2$}} &
		\multicolumn{1}{c|}{\textbf{$x_3$}} &
		\textbf{$x_4$} \\ \hline
		0 &
		\multicolumn{1}{c|}{1} &
		\multicolumn{1}{c|}{0.8} &
		\multicolumn{1}{c|}{0.6} &
		0.4 &
		\multicolumn{1}{c|}{1} &
		\multicolumn{1}{c|}{0.8} &
		\multicolumn{1}{c|}{0.6} &
		0.4 \\ \hline
		10 &
		\multicolumn{1}{c|}{0.5914} &
		\multicolumn{1}{c|}{0.4211} &
		\multicolumn{1}{c|}{1.035} &
		0.1279 &
		\multicolumn{1}{c|}{0.755} &
		\multicolumn{1}{c|}{0.4662} &
		\multicolumn{1}{c|}{0.3556} &
		0.1156 \\ \hline
		20 &
		\multicolumn{1}{c|}{0.8992} &
		\multicolumn{1}{c|}{0.4579} &
		\multicolumn{1}{c|}{0.1671} &
		0.1128 &
		\multicolumn{1}{c|}{0.9138} &
		\multicolumn{1}{c|}{0.4914} &
		\multicolumn{1}{c|}{0.8201} &
		0.1274 \\ \hline
		30 &
		\multicolumn{1}{c|}{0.8354} &
		\multicolumn{1}{c|}{0.5197} &
		\multicolumn{1}{c|}{1.3256} &
		0.1325 &
		\multicolumn{1}{c|}{0.8464} &
		\multicolumn{1}{c|}{0.487} &
		\multicolumn{1}{c|}{0.484} &
		0.1198 \\ \hline
		40 &
		\multicolumn{1}{c|}{0.8524} &
		\multicolumn{1}{c|}{0.4738} &
		\multicolumn{1}{c|}{0.2831} &
		0.1152 &
		\multicolumn{1}{c|}{0.8522} &
		\multicolumn{1}{c|}{0.4964} &
		\multicolumn{1}{c|}{0.6875} &
		0.1237 \\ \hline
		50 &
		\multicolumn{1}{c|}{0.8526} &
		\multicolumn{1}{c|}{0.509} &
		\multicolumn{1}{c|}{1.0181} &
		0.1287 &
		\multicolumn{1}{c|}{0.8653} &
		\multicolumn{1}{c|}{0.4904} &
		\multicolumn{1}{c|}{0.5713} &
		0.122 \\ \hline
		60 &
		\multicolumn{1}{c|}{0.8637} &
		\multicolumn{1}{c|}{0.4801} &
		\multicolumn{1}{c|}{0.3782} &
		0.1179 &
		\multicolumn{1}{c|}{0.8501} &
		\multicolumn{1}{c|}{0.4938} &
		\multicolumn{1}{c|}{0.6202} &
		0.1225 \\ \hline
	\end{tabular}
\end{table}\FloatBarrier

\FloatBarrier

	\begin{figure}[h]
		\centering
		\includegraphics[width=0.9\linewidth]{2.png}
		\caption{Fractional order proposed model with order 0.98 and delay 0.1 }
		\label{fig2}
	\end{figure}

The system response with an intermediate delay of $0.3$ is presented in Figure \ref{fig3}. In this figure, oscillations are more pronounced than in Figure \ref{fig2} but less severe compared to Figure \ref{fig1}. The demand ($x_1$) and supply ($x_2$) variables exhibit fluctuations before stabilization, and the energy import ($x_3$) experiences higher peaks than under delay $\tau=0.1$. Nevertheless, the renewable energy ($x_4$) stabilizes quickly, showing limited sensitivity to the delay value. This highlights that moderate delay introduces noticeable instability in the short term, but the system still remains bounded and converges to equilibrium. In Table \ref{tab3} represented the integer order proposed model for with and without delay.
\begin{table}[h]
	\centering
	\caption{Integer order proposed system}
	\label{tab3}
	\begin{tabular}{|c|cccc|cccc|}
		\hline
		\multirow{2}{*}{Time} &
		\multicolumn{4}{c|}{With   Delay} &
		\multicolumn{4}{c|}{Without   Delay} \\ \cline{2-9}
		&
		\multicolumn{1}{c|}{$x_1$} &
		\multicolumn{1}{c|}{$x_2$} &
		\multicolumn{1}{c|}{$x_3$} &
		$x_4$ &
		\multicolumn{1}{c|}{$x_1$} &
		\multicolumn{1}{c|}{$x_2$} &
		\multicolumn{1}{c|}{$x_3$} &
		$x_4$ \\ \hline
		0 &
		\multicolumn{1}{c|}{1} &
		\multicolumn{1}{c|}{0.8} &
		\multicolumn{1}{c|}{0.6} &
		0.4 &
		\multicolumn{1}{c|}{1} &
		\multicolumn{1}{c|}{0.8} &
		\multicolumn{1}{c|}{0.6} &
		0.4 \\ \hline
		10 &
		\multicolumn{1}{c|}{0.5749} &
		\multicolumn{1}{c|}{0.427} &
		\multicolumn{1}{c|}{1.5976} &
		0.1358 &
		\multicolumn{1}{c|}{0.723} &
		\multicolumn{1}{c|}{0.4544} &
		\multicolumn{1}{c|}{0.3264} &
		0.1144 \\ \hline
		20 &
		\multicolumn{1}{c|}{0.689} &
		\multicolumn{1}{c|}{0.4148} &
		\multicolumn{1}{c|}{0.0824} &
		0.1036 &
		\multicolumn{1}{c|}{0.988} &
		\multicolumn{1}{c|}{0.4684} &
		\multicolumn{1}{c|}{0.8099} &
		0.1296 \\ \hline
		30 &
		\multicolumn{1}{c|}{1.1354} &
		\multicolumn{1}{c|}{0.3655} &
		\multicolumn{1}{c|}{0.1503} &
		0.123 &
		\multicolumn{1}{c|}{0.8008} &
		\multicolumn{1}{c|}{0.4808} &
		\multicolumn{1}{c|}{0.4078} &
		0.1172 \\ \hline
		40 &
		\multicolumn{1}{c|}{0.7302} &
		\multicolumn{1}{c|}{0.5349} &
		\multicolumn{1}{c|}{2.2332} &
		0.1418 &
		\multicolumn{1}{c|}{0.8884} &
		\multicolumn{1}{c|}{0.497} &
		\multicolumn{1}{c|}{0.82} &
		0.1268 \\ \hline
		50 &
		\multicolumn{1}{c|}{0.6653} &
		\multicolumn{1}{c|}{0.4122} &
		\multicolumn{1}{c|}{0.1182} &
		0.1056 &
		\multicolumn{1}{c|}{0.8527} &
		\multicolumn{1}{c|}{0.4863} &
		\multicolumn{1}{c|}{0.4704} &
		0.1197 \\ \hline
		60 &
		\multicolumn{1}{c|}{1.1177} &
		\multicolumn{1}{c|}{0.3778} &
		\multicolumn{1}{c|}{0.1697} &
		0.1227 &
		\multicolumn{1}{c|}{0.8467} &
		\multicolumn{1}{c|}{0.4982} &
		\multicolumn{1}{c|}{0.7207} &
		0.1242 \\ \hline
	\end{tabular}
\end{table}\FloatBarrier

	\begin{figure}[h]
		\centering
		\includegraphics[width=0.9\linewidth]{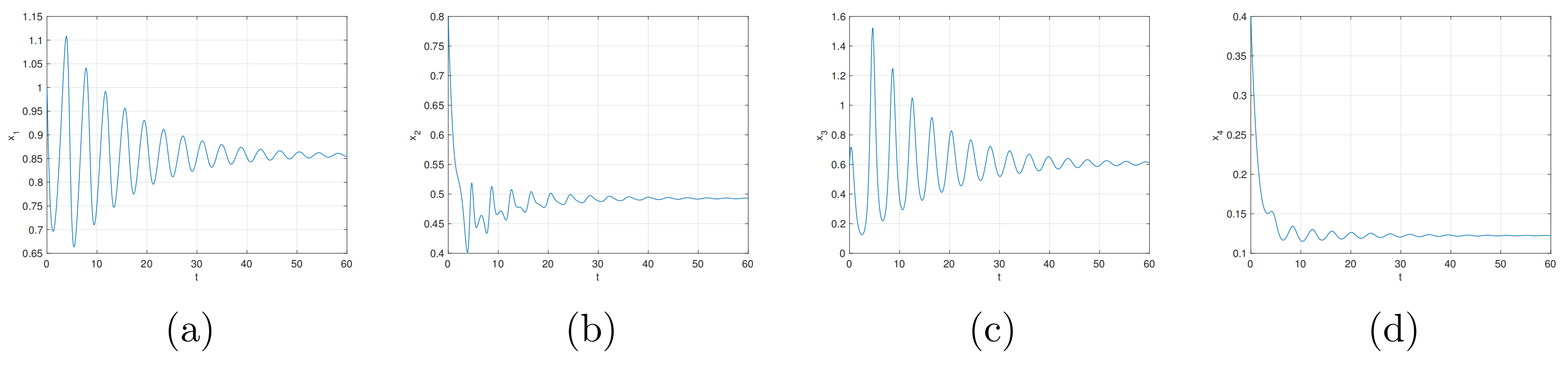}
		\caption{Fractional order proposed model with order 0.98 and delay 0.3 }
		\label{fig3}
	\end{figure}

\FloatBarrier

	\begin{figure}[h]
		\centering
		\includegraphics[width=0.9\linewidth]{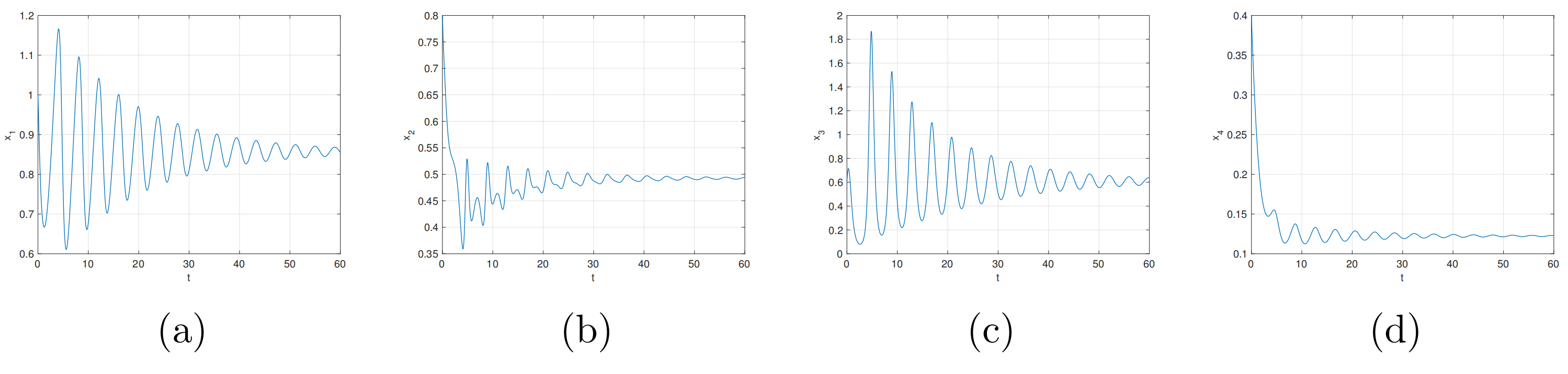}
		\caption{Fractional order proposed model with order 0.98 and delay 0.5 }
		\label{fig4}
	\end{figure}

\FloatBarrier
Figure \ref{fig4} illustrates the system behavior when the fractional order is $0.98$ and the delay is fixed at $\tau=0.5$. In this case, all four state variables exhibit pronounced oscillations before gradually stabilizing. The energy demand ($x_1$) in Figure \ref{fig4}- a initially oscillates with large amplitude but slowly decays and approaches a steady level near $0.85$. The energy supply ($x_2$) in Figure \ref{fig4}- b shows strong fluctuations at the beginning, dropping below $0.35$ and then recovering, before converging towards approximately $0.5$. The energy import ($x_3$) in Figure \ref{fig4}- c  experiences sharp peaks in the early stage, reaching values above $2.0$, but eventually settles down to a lower constant level. The renewable energy component ($x_4$) in Figure \ref{fig4}- d displays rapid damping and stabilizes quickly near $0.12$, with relatively small oscillations compared to the other variables.

These results confirm that under a higher delay of $0.5$, the system exhibits more instability and stronger oscillatory dynamics in demand, supply, and imports. Nevertheless, all state variables remain bounded and eventually converge to equilibrium, with renewable energy showing the fastest stabilization.

Figures \ref{fig5}-\ref{fig7} illustrate the dynamical behavior of the system for different fractional orders ($q = 0.98, 0.99, 1$) and time delays  ($\tau = 0.5, 0.3, 0.1$). Each figure consists of four subplots corresponding to the time evolution of the state variables $x_1, x_2, x_3,$ and $x_4$. The comparison among different fractional orders demonstrates how even small variations in the derivative order influence system stability, oscillations, and convergence rates.

In Figure \ref{fig5}, with the largest delay $\tau = 0.5$, the system exhibits
pronounced oscillations across all state variables. Figures \ref{fig5}- a, b, and c show that the oscillation amplitudes are higher for fractional order $q = 0.98$ compared
to $0.99$ and $1$, indicating that decreasing the fractional order increases the
system's oscillatory behavior. Figure \ref{fig5}- d highlights that although the system
eventually stabilizes, the larger delay results in slow damping and extended transient
behavior.

	\begin{figure}[h]
		\centering
		\includegraphics[width=0.9\linewidth]{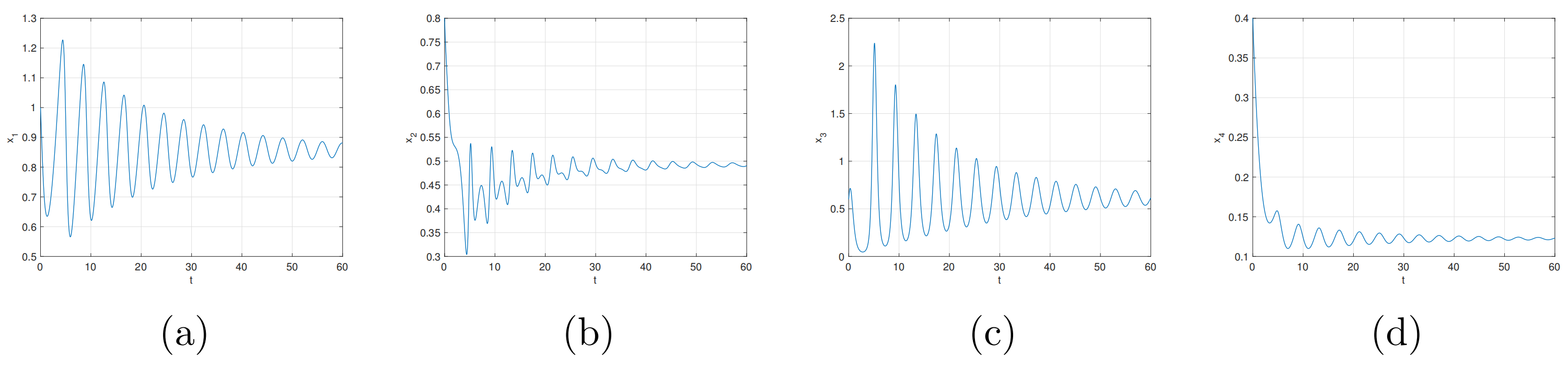}
		\caption{Various fractional orders $0.98$, $0.99$ and $1$ with time delay $\tau=0.5$. }
		\label{fig5}
	\end{figure}

\FloatBarrier
\subsection{{Comparison Between Integer-Order Model and Fractional-Order Model With Time Delay}}
The integer-order energy supply-demand model without delay offers a simplified representation of energy interactions by assuming that system responses occur instantaneously without any memory of past states. This approach enables straightforward mathematical analysis and provides basic insights into the relationship between energy demand, supply, imports, and renewable resources. However, it overlooks the inherent inertia and time-dependent adjustments present in real-world energy systems, where factors such as policy implementation, production scaling, and consumption behavior evolve gradually over time. As a result, the integer-order model often fails to capture the delayed and cumulative effects that influence long-term system dynamics.

The fractional-order energy supply-demand model with time delay addresses these limitations by introducing both memory and delay characteristics into the system dynamics. The fractional derivative represents the hereditary nature of energy processes, allowing the present state to depend on the entire history of the system. The time delay term, meanwhile, captures the practical time lag between decision-making and its impact on supply or demand. Together, these features yield a more realistic and flexible framework capable of modeling slow transients, oscillations, and long-term dependencies that are typically observed in actual energy networks.

When the time delay is reduced to $\tau = 0.3$, as shown in Figure \ref{fig6}, the oscillatory behavior weakens compared to $\tau = 0.5$. Sub-figure \ref{fig6}- a, b, c illustrate that fractional order $0.98$ still produces oscillations, but with lower amplitudes and faster decay than in Figure \ref{fig5}. For the integer order case ($1$), the system stabilizes more rapidly, confirming that both higher derivative orders and smaller time delays improve convergence. Figure \ref{fig6}- d further confirms that reducing delay enhances damping and shortens the transient period.

In Figure \ref{fig7}, with the smallest time delay $\tau = 0.1$, the system dynamics become  much smoother. Sub-figure \ref{fig7}- a, b and c reveal that oscillations are significantly suppressed, and all fractional orders lead to rapid stabilization. The differences between $0.98, 0.99,$ and $1$ become minimal, indicating that at very small delays, the effect
of fractional differentiation order on stability is less pronounced. Figure \ref{fig7}- d 
shows almost immediate convergence to the steady state, confirming strong robustness against
delays at this scale.

	\begin{figure}[h]
		\centering
		\includegraphics[width=0.9\linewidth]{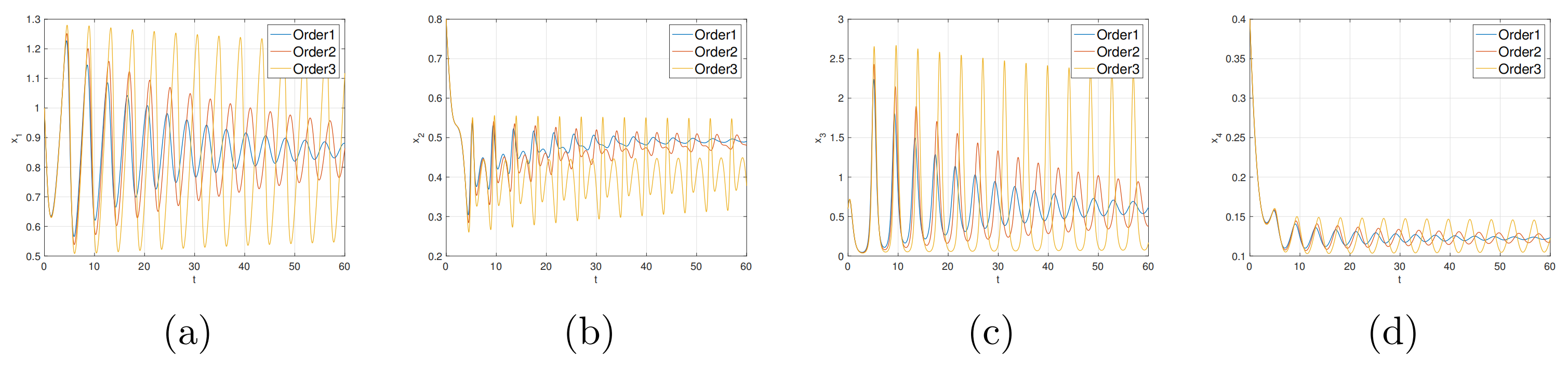}
		\caption{Various fractional orders $0.98$, $0.99$ and $1$ with time delay $\tau=0.3$. }
		\label{fig6}
	\end{figure}

\FloatBarrier

In Figure \ref{fig7}, with the smallest time delay of 0.1, the system exhibits much smoother behavior. Figures \ref{fig7}- a,  b and c reveal that oscillations are significantly suppressed, and the system stabilizes rapidly for all fractional orders. The difference between orders 0.98, 0.99, and 1 becomes less prominent, showing that at small delays, the effect of fractional differentiation order on stability is weaker. Figure \ref{fig7}- d demonstrates almost immediate decay to a steady state, indicating strong stability and robustness of the system for small delays.

The results in Figures \ref{fig5}-\ref{fig7} highlight the combined influence of fractional order and time delay on system dynamics. Lower fractional orders ($0.98$) induce stronger oscillations and slower convergence, while higher orders ($0.99,\ 1$) enhance stability. Similarly, larger delays ($\tau = 0.5$) prolong oscillations and transient phases, whereas smaller delays ($\tau = 0.1$) lead to faster damping and quicker stabilization. These findings emphasize that both fractional differentiation order and delay must be carefully considered when analyzing or designing such dynamical systems.

	\begin{figure}[h]
		\centering
		\includegraphics[width=0.9\linewidth]{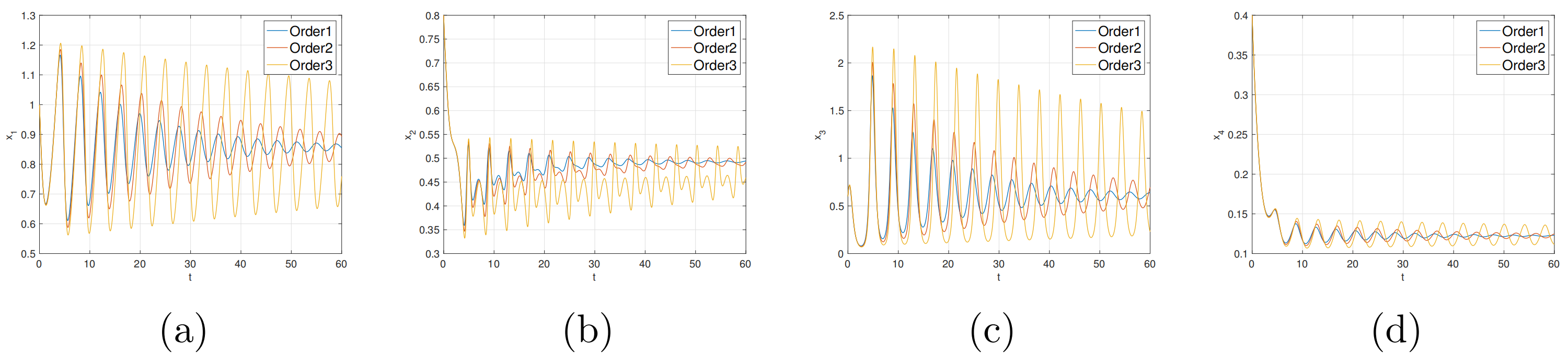}
		\caption{Various fractional orders $0.98$, $0.99$ and $1$ with time delay $\tau=0.1$. }
		\label{fig7}
	\end{figure}

\FloatBarrier
{Overall, while the integer-order model without delay is simpler and computationally efficient, the fractional-order model with time delay provides a more comprehensive and accurate description of real energy dynamics. Its ability to capture memory effects and delayed responses enhances both predictive power and stability analysis, offering valuable insights for energy policy formulation and sustainable system design. Hence, the fractional-order delayed framework represents a significant advancement in modeling the complex temporal behavior of energy supply-demand systems.}\\

Figure \ref{fig8} presents the system responses for the fractional order $0.98$ under different time delays $\tau = 0.5, 0.3, 0.1$. The sub figures correspond to the state variables $x_1, x_2, x_3,$ and $x_4$. It can be observed that all state trajectories initially exhibit oscillatory behavior before converging to steady-state values. Larger delays result in slightly higher oscillation amplitudes in the transient stage, but eventually, all responses converge to the same equilibrium point. This demonstrates that the system retains stability even under varying delays
for fractional order $0.98$.

	\begin{figure}[h]
		\centering
		\includegraphics[width=0.9\linewidth]{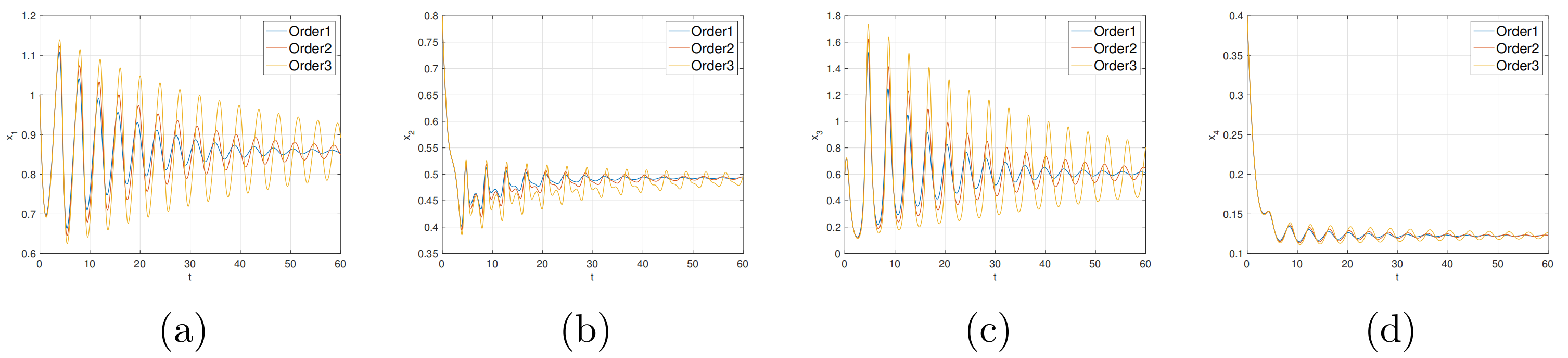}
		\caption{Fractional order $0.98$ with different time delays $\tau=0.5,\ 0.3,\ 0.1$. }
		\label{fig8}
	\end{figure}

\FloatBarrier

Figure \ref{fig9} shows the system dynamics for the fractional order $0.99$ with the same time delays. Compared to Figure \ref{fig8}, the oscillations are more sustained, and the damping is slower. This indicates that as the fractional order approaches unity, the memory effect of the system increases, leading to longer-lasting oscillatory transients. Nevertheless, the system remains stable, and the trajectories corresponding to different delays overlap in the long term, highlighting robustness against delay variations.

	\begin{figure}[h]
		\centering
		\includegraphics[width=0.9\linewidth]{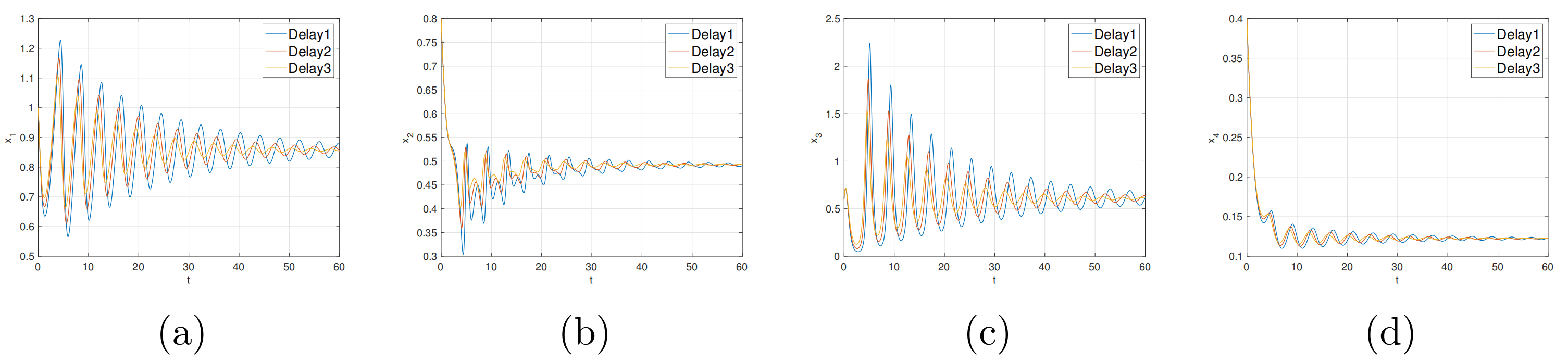}
		\caption{Fractional order $0.99$ with different time delays $\tau=0.5,\ 0.3,\ 0.1$.}
		\label{fig9}
	\end{figure}

\FloatBarrier

Figure \ref{fig10} illustrates the responses of the integer-order system with delays $\tau = 0.5, 0.3, 0.1$. In this case, the transient oscillations are more prominent compared to the fractional-order systems, and the convergence to equilibrium is relatively slower. The influence of time delay is also clearer, particularly in the initial oscillations, where the trajectory with the largest delay shows the most pronounced deviation. However, similar to the fractional-order cases, all trajectories ultimately converge to the same steady-state, confirming overall system stability.

A comparison of Figures \ref{fig8}-\ref{fig10} reveals that fractional-order models, especially with $0.98$, provide faster damping and smoother convergence compared to both the higher fractional order $0.99$ and the integer-order case. This highlights the effectiveness of fractional-order modeling in improving system stability and reducing the impact of time delays.

	\begin{figure}[h]
		\centering
		\includegraphics[width=0.9\linewidth]{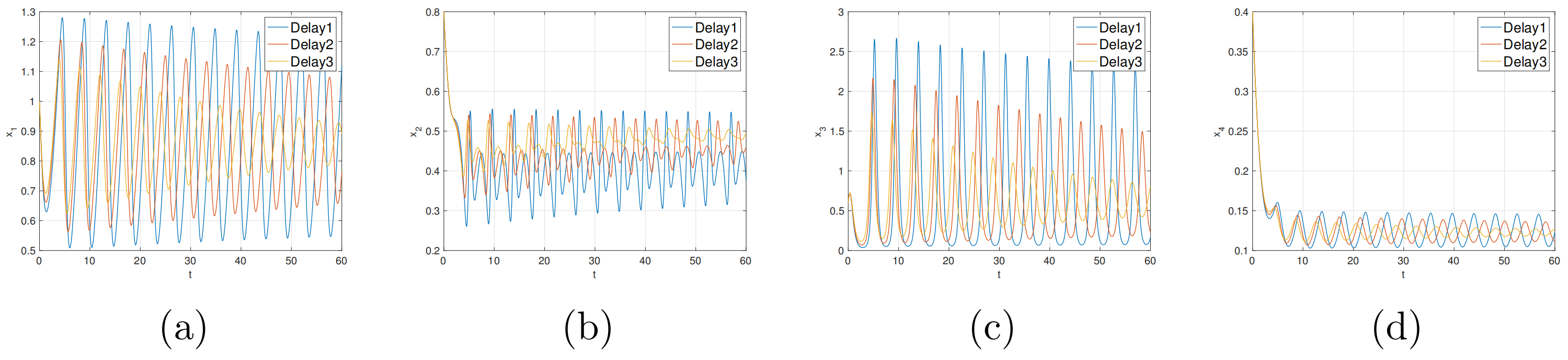}
		\caption{Integer order with different time delays $\tau=0.5,\ 0.3,\ 0.1$.}
		\label{fig10}
	\end{figure}

\FloatBarrier


\subsection{Sensitivity Analysis Scenarios}
We discuss the following scenarios. 


\paragraph{Scenario A: High Demand Combined With Low Supply (Energy Crisis).}
A demand surge coincides with external supply reduction.
\begin{itemize}
	\item Increase $x_1(0)$ by 25\%.
	\item Decrease $x_2(0)$ by 30\%.
	\item Increase $z_2$ to emphasize the importance of imports.
	\item Increase $d_3$ to strengthen the mitigating effect of renewables on demand.
\end{itemize}
This scenario represents a realistic crisis producing strong nonlinear oscillations.

\paragraph{Scenario B: High Renewables and Reduced Imports.}
Region~A moves toward energy independence.
\begin{itemize}
	\item Increase $x_4(0)$ by 50\%.
	\item Decrease $x_3(0)$ by 20\%.
	\item Lower $s_1$ and increase $d_1$.
\end{itemize}
This scenario evaluates system stability under renewable-dominated conditions.

\paragraph{Scenario C: Low Demand With High Imports (Oversupply).}
A recession reduces demand while long-term contracts force high import volumes.
\begin{itemize}
	\item Decrease $x_1(0)$ by 20\%.
	\item Increase $x_3(0)$ by 20\%.
	\item Decrease $a_1$ and increase $s_1$.
\end{itemize}
This scenario tests oversupply-induced oscillatory behavior.

	\begin{figure}[h]
		\centering
		\includegraphics[width=0.9\linewidth]{11.png}
		\caption{Sensitivity analysis of fractional order 0.98 with time delays $\tau=0.1$. }
		\label{fig11}
	\end{figure}

\FloatBarrier
Figure~\ref{fig11} illustrates the sensitivity analysis of the fractional-order energy supply--demand system for fractional order $\alpha = 0.98$ and time delay $\tau = 0.1$ under three representative scenarios. Figures \ref{fig11}- a, b, c and d show the temporal evolution of the state variables $x_1(t)$, $x_2(t)$, $x_3(t)$, and $x_4(t)$, respectively. The baseline dynamics are compared with Scenarios~A--C in order to examine the influence of coordinated perturbations in initial conditions and key system parameters.

In Scenario~A (high demand combined with low supply), the system exhibits pronounced transient oscillations, particularly in the demand and import components. The increased oscillation amplitudes observed in $x_1(t)$ and $x_3(t)$ reflect the destabilizing effects of demand surges and supply contraction during energy crisis conditions. Although the solutions remain bounded, the convergence toward equilibrium is slower than in the baseline case.

Scenario~B (high renewables and reduced imports) displays comparatively smoother trajectories across all state variables. The enhanced contribution of renewables introduces stronger damping effects, resulting in reduced oscillation amplitudes and faster stabilization. This scenario highlights the stabilizing role of renewable penetration in mitigating system fluctuations.

In contrast, Scenario~C (low demand with high imports) leads to sustained oscillatory behavior, most notably in the import dynamics $x_3(t)$. This response is indicative of oversupply-induced instabilities arising from delayed feedback mechanisms in the fractional-order system.

\section{Conclusion}
In this study, we have developed and analyzed a fractional-order energy supply-demand model incorporating time delays to better capture the dynamic characteristics of real-world energy systems. By extending the classical framework through fractional calculus, the model reflects memory and hereditary properties, which are essential for understanding the cumulative and long-term effects in energy production, distribution, and consumption. The inclusion of time delay further enhances the realism of the model, since energy processes are rarely instantaneous, and delays are naturally present in decision-making, storage, transmission, and demand response mechanisms. To support the theoretical formulation, we established existence and uniqueness results for the proposed system, ensuring that the model is mathematically well-posed and capable of producing reliable solutions. Moreover, the stability analysis, particularly under the Ulam-Hyers framework, confirmed that the model is robust against small perturbations, highlighting its reliability in practical applications where uncertainties and fluctuations are inevitable.

The results of this work underscore the importance of fractional-order modeling in energy system analysis. Unlike traditional integer-order approaches, the fractional framework allows us to incorporate complex dynamics, such as memory and hereditary effects, that are indispensable for describing real processes. Similarly, the time delay element introduces an additional layer of realism by accounting for inevitable lags in the supply chain and consumption cycle. Together, these features provide a more accurate representation of the energy sector, making the model suitable for exploring long-term policies, forecasting demand-supply balance, and optimizing system efficiency. The mathematical results obtained here not only justify the theoretical foundations but also provide a solid background for future numerical and simulation studies.

In practical terms, this research offers significant potential for application in modern energy management, particularly in renewable and smart grid systems, where delays and memory effects play a central role. The findings can aid policymakers, engineers, and system designers in predicting demand patterns, designing efficient load-leveling strategies, and mitigating instabilities caused by supply fluctuations. Furthermore, the framework established here can be extended to incorporate additional features such as stochastic influences, fuzzy uncertainties, or network effects, which are increasingly relevant in today's interconnected energy landscape.

To conclude, the proposed fractional-order time-delay energy supply-demand model provides a novel and effective mathematical tool for analyzing the intricate behavior of energy systems. It not only enriches the theoretical literature on fractional dynamical systems but also offers practical insights for addressing real-world energy challenges. Future research directions may include the integration of optimization techniques, numerical simulations with real data, and the development of computational algorithms tailored to fractional and delayed models. Such advancements will further bridge the gap between mathematical modeling and practical implementation, paving the way toward more sustainable, stable, and efficient energy systems.

\section*{Declarations}
\subsection*{Conflicts of interest} The authors declare no conflict of interest.

\subsection*{Ethical Statement}
This article does not contain any studies with human participants or animals performed by any of the authors.
\subsection*{Funding Statement}
The work of S. Noeiaghdam was funded by the High-Level Talent Research Start-up Project Funding of Henan Academy of Sciences (Project No. 241819246).
\subsection*{Data availability}
All data generated or analyzed during this study are included in this article.
\subsection*{Authors contributions}
S.N and S.N wrote the main manuscript text, edited, software and coding and prepared the results. All authors reviewed the manuscript.


\begin{thebibliography}{}

\end{thebibliography}


\begin{thebibliography}{70}
	
\bibitem{1} J.D. Murray, Mathematical Biology: I. An Introduction, Springer Science \& Business Media, 17 (2007).

\bibitem{2} M.W. Hirsch, S. Smale, R.L. Devaney, Differential Equations, Dynamical Systems, and an Introduction to Chaos, Academic Press (2013).

\bibitem{3} M. Sun, L. Tian, J. Xu, Time-delayed feedback control of the energy resource chaotic system, International Journal of Nonlinear Science, 1(3) (2006) 172-177.

\bibitem{4} M. Sun, L. Tian, Y. Fu, An energy resources demand--supply system and its dynamical analysis, Chaos, Solitons \& Fractals, 32(1) (2007) 168-180.

\bibitem{5} M. Sun, L. Tian, Q. Jia, Adaptive control and synchronization of a four-dimensional energy resources system with unknown parameters, Chaos, Solitons \& Fractals, 39(4) (2009) 1943-1949.

\bibitem{6} M. Sun, Y. Tao, X. Wang, L. Tian, The model reference control for the four-dimensional energy supply-demand system, Applied Mathematical Modelling, 35(10) (2011) 5165-5172.

\bibitem{7} M. Sun, Q. Jia, L. Tian, A new four-dimensional energy resources system and its linear feedback control, Chaos, Solitons \& Fractals, 39(1) (2009) 101-108.

\bibitem{8} V.T. Vo, S. Noeiaghdam, D. Sidorov, A. Dreglea, L. Wang, Solving Nonlinear Energy Supply and Demand System Using Physics-Informed Neural Networks, Computation, 13 (2025) 13. https://doi.org/10.3390/computation13010013

\bibitem{9} S. Noeiaghdam, D. Sidorov, Caputo-Fabrizio Fractional Derivative to Solve the Fractional Model of Energy Supply-Demand System, Mathematical Modelling of Engineering Problems, 7(3) (2020) 359-367. https://doi.org/10.18280/mmep.070305

\bibitem{10} M.A. Matlob, Y. Jamali, The concepts and applications of fractional order differential calculus in modeling of viscoelastic systems: A primer, Critical Reviews in Biomedical Engineering, 47(4) (2019).

\bibitem{11} J.L. Suzuki, M. Gulian, M. Zayernouri, M. D'Elia, Fractional modeling in action: A survey of nonlocal models for subsurface transport, turbulent flows, and anomalous materials, Journal of Peridynamics and Nonlocal Modeling, 5(3) (2023) 392-459.

\bibitem{12} A.M. Lopes, L. Chen, Fractional order systems and their applications, Fractal and Fractional, 6(7) (2022) 389.

\bibitem{13} S. Noeiaghdam, S. Micula, J.J. Nieto, Novel Technique to Control the Accuracy of a Nonlinear Fractional Order Model of COVID-19: Application of the CESTAC Method and the CADNA Library, Mathematics, 9 (2021) 1321. https://doi.org/10.3390/math9121321

\bibitem{14} T. Allahviranloo, Z. Noeiaghdam, S. Noeiaghdam, J.J. Nieto, A Fuzzy Method for Solving Fuzzy Fractional Differential Equations Based on the Generalized Fuzzy Taylor Expansion, Mathematics, 8(12) (2020) 2166. https://doi.org/10.3390/math8122166

\bibitem{15} M. Hedayati, R. Ezzati, S. Noeiaghdam, New Procedures of a Fractional Order Model of Novel Coronavirus (COVID-19) Outbreak via Wavelets Method, Axioms, 10 (2021) 122. https://doi.org/10.3390/axioms10020122

\bibitem{16} F. Ghomanjani, S. Noeiaghdam, Application of Said Ball curve for solving fractional differential-algebraic equations, Mathematics, 9 (2021) 1926. https://doi.org/10.3390/math9161926

\bibitem{17} F. Ghomanjani, S. Noeiaghdam, S. Micula, Application of transcendental Bernstein polynomials for solving two-dimensional fractional optimal control problems, Complexity, Article ID 4303775 (2022). https://doi.org/10.1155/2022/4303775

\bibitem{18} M. Sivashankar, S. Sabarinathan, V. Govindan, U. Fernandez-Gamiz, S. Noeiaghdam, Stability analysis of COVID-19 outbreak using Caputo-Fabrizio fractional differential equation, AIMS Mathematics, 8(2) (2023) 2720-2735. https://doi.org/10.3934/math.2023143

\bibitem{19} M. Caputo, M. Fabrizio, A new definition of fractional derivative without singular kernel, Progress in Fractional Differentiation \& Applications, 1(2) (2015) 73-85.

\bibitem{20} J. Losada, J.J. Nieto, Properties of a new fractional derivative without singular kernel, Progress in Fractional Differentiation \& Applications, 1(2) (2015) 87-92.

\bibitem{21} S. Jose, S. Naveen, V. Parthiban, A study on controllability of fractional dynamical systems with distributed delays modeled by $\Omega$-Hilfer fractional derivatives, International Journal of Dynamics and Control, 12(1) (2024) 259-270.

\bibitem{22} S. Naveen, V. Parthiban, Existence, uniqueness and error analysis of variable-order fractional Lorenz system with various type of delays, International Journal of Bifurcation and Chaos, 34(12) (2024) 2450152.

\bibitem{23} S. Naveen, K. Venkatachalam, V. Parthiban, Analysis of variable-order derivative with Mittag--Leffler kernel and integral boundary conditions for RLC circuit system, International Journal of Computer Mathematics (2025) 1-18.

\bibitem{24} S. Naveen, V. Parthiban, Variable-order Caputo derivative of LC and RC circuits system with numerical analysis, International Journal of Circuit Theory and Applications, 53(5) (2025) 3136-3156.

\bibitem{25} S. Naveen, V. Parthiban, Qualitative analysis of variable-order fractional differential equations with constant delay, Mathematical Methods in the Applied Sciences, 47(4) (2024) 2981-2992.

\bibitem{26} K. Agilan, S. Naveen, S. Suganya, V. Parthiban, Analysis of variable-order fractional enzyme kinetics model with time delay, Scientific Reports, 15(1) (2025) 34255.
	
	
\end{thebibliography}
\end{document}